\documentclass{amsart}
\pagestyle{plain}

\usepackage{graphicx}

\hfuzz1pc % (Don't bother to report overfull boxes if overage is < 1pc)

%       Theorem environments
% theorem numbering gets reset to 1 in each section
\theoremstyle{plain}
\newtheorem{thm}{Theorem}[section]
\newtheorem{cor}[thm]{Corollary}
\newtheorem{lem}[thm]{Lemma}
\newtheorem{prop}[thm]{Proposition}

\theoremstyle{definition}
% most others are numbered together with theorems
\newtheorem{dfn}[thm]{Definition}

% remark style
\theoremstyle{remark}
\newtheorem{rem}[thm]{Remark}
%
%%       Theorem environments
%% theorem numbering gets reset to 1 in each section
%\theoremstyle{definition}
%\newtheorem{thm}{Theorem}[section]
%% most others are numbered together with theorems
%\newtheorem{dfn}[thm]{Definition}
%\newtheorem{cor}[thm]{Corollary}
%\newtheorem{lem}[thm]{Lemma}
%\newtheorem{prop}[thm]{Proposition}
%\newtheorem{exmp}[thm]{Example}
%\newtheorem{problem}[thm]{Problem}
%\newtheorem{assum}[thm]{Assumption}
%% remark style
%\theoremstyle{remark}
%\newtheorem{rem}[thm]{Remark}

%\newtheorem{notation}{Notation}
%\renewcommand{\thenotation}{}  % to make the notation environment unnumbered

% equation counter will be reset at the start of each section
\numberwithin{equation}{section}

%       Math definitions
% Blackboard Bold letters

% operators

\begin{document}
\title[On MST-like Metric Spaces] % running head version
{On Minimum Spanning Tree-like Metric Spaces}
\author{Momoko Hayamizu$^{\ast\dagger}$}
\thanks{*~Department of Statistical Science, The Graduate University of Advanced Studies}
\author{Kenji Fukumizu$^{\dagger\ast}$}
\thanks{\dag~The Institute of Statistical Mathematics}
\curraddr[Momoko Hayamizu, Kenji Fukumizu]{Institute of Statistical Mathematics\\
 10-3 Midori-cho\\ Tachikawa, Tokyo 190-8562\\ Japan}
\email[]{\{hayamizu,fukumizu\}@ism.ac.jp}

\subjclass[2010]{Primary 05C12; Secondary 05C05, 05C38}
\keywords{minimum spanning tree, tree metric, median graph}
\maketitle

\begin{abstract}
We attempt to shed new light on the notion of `tree-like' metric spaces by focusing on an approach that does not use the four-point condition. Our key question is: Given metric space $M$ on $n$ points, when does a fully labelled positive-weighted tree $T$ exist on the same $n$ vertices that precisely realises $M$ using its shortest path metric? 
We prove that if a spanning tree representation, $T$, of $M$ exists, then it is isomorphic to the unique minimum spanning tree in the weighted complete graph associated with $M$, 
and we introduce a \emph{fourth-point condition} that is necessary and sufficient to ensure the existence of $T$ whenever each distance in $M$ is unique. In other words, a finite median graph, in which each geodesic distance is distinct, is simply a tree. Provided that the tie-breaking assumption holds, the fourth-point condition serves as a criterion for measuring the goodness-of-fit of the minimum spanning tree to $M$, i.e., the spanning tree-likeness of $M$. It is also possible to evaluate the spanning path-likeness of $M$. These quantities can be measured in $O(n^4)$ and $O(n^3)$ time, respectively.
\end{abstract}
\section{Introduction}\label{sec:intro}
Historically, graphs as finite metric spaces have been extensively studied~\cite{Ha}. Even though we approach them differently, we would like to emphasise, amongst others~\cite{I,P1,P2,S}, the classical result provided by Buneman~\cite{B}. In short, a metric on a finite set can be realised by the shortest path metric in a positive-weighted tree if and only if it satisfies the four-point condition.
Not only is it frequently quoted in the context of evolutionary trees~\cite{SS}, but
it is also known for its direct connection to the theory of Gromov hyperbolic metric spaces~\cite{G}. Approximately two decades later after Buneman's theorem, Hendy~\cite{H} proved the existence of a unique tree representation for every metric satisfying the four-point condition.

Given this background, a metric space that satisfies the four-point condition is commonly considered tree-like. However, an important caveat should be addressed: 
the four-point condition is necessary and sufficient to ensure the existence of a \emph{partially labelled} tree that realises a given metric~\cite{Ha,H,SS}.
For example, a complete graph with a uniform edge length clearly satisfies the four-point condition, but it only becomes tree-like after an extra vertex is added. In this case, the four-point condition does not ensure that  a metric  is  realised by  a \emph{fully labelled} tree on \emph{the same} set. It does not characterise the distance within trees, in general, but rather the shortest path metrics induced by graphs of a certain class, called \emph{block graphs} (\textit{i.e.}, graphs in which all biconnected components  are complete subgraphs)~\cite{Ba}.

This may not create an issue in the field of conventional phylogenetics, but considering the recent surge of renewed biological interest in minimum spanning tree (MST)-based tree estimation~\cite{Q}, determining when a metric space is realised by a positive-weighted tree on the same set is not only a natural undertaking but also a meaningful one.
Thus far, this problem has not been properly recognised, much less addressed. The only two exceptions to this are the recent work provided in~\cite{A} and in~\cite{HEF}. It seems to be a non-trivial question not only because it cannot be answered using Buneman's theorem, but also because it is equivalent to determining a method for recognising a special case of the metric travelling salesman problem (TSP). If an input---a metric on a set of cities---is the shortest path metric in a tree on the city set,  
the length of the optimal tour must equal twice the length of the MST.

In this paper, we examine the sub-type of tree metrics without relying on the four-point condition. 
Our work is based on three ingredients: 
the  so-called tie-breaking assumption, which has been popular in algorithmic applications since the work provided by Kruskal in~\cite{K}; 
what we call  the fourth-point condition, which can typically be found in the definition of median metric spaces~\cite{DD}; and a simple trick for metric-preserving edge removal, which applies to any finite metric space.
These concepts, which are part of our original results, are defined and discussed in  Section~\ref{sec:preliminaries}.

As expected, if it exists, a fully labelled positive-weighted tree that realises a finite metric space is the unique MST in its associated weighted complete graph (Proposition~\ref{prop:1}).  Our goal is to prove the following: 
A finite metric space under the tie-breaking rule is realised by the MST if and only if it satisfies the fourth point condition (Theorem~\ref{thm:1}). This implies that every finite median graph, in which the shortest path lengths between all pairs of vertices are distinct, is necessarily a tree (Corollary~\ref{cor:2}). This result also yields a stronger condition for understanding when a finite metric space is realised, especially by a spanning path graph (Corollary~\ref{cor:1}).
We define and discuss the notion of a spanning tree-likeness of a finite metric space in Section~\ref{sec:discussion}.
%We provide the notions of a \emph{spanning tree-likeness} and a \emph{spanning path-likeness} of a finite metric space and contrast them with the Gromov hyperbolicity in  Section~\ref{sec:discussion}.

\section{Preliminaries}\label{sec:preliminaries}
We apply the metric-related terminology provided in \cite{DD} throughout this paper.
Let $(X,d_M)$ be a \emph{finite metric space}, that is, a finite set, $X$, equipped with metric $d_M$.  
For two distinct points $x$ and $x^{\prime}$ in $X$, the \emph{closed metric interval} between them is defined to be the set 
\[
I(x,x^{\prime}):=\{i\in X: d_M(x,x^{\prime})=d_M(x,i)+d_M(i,x^{\prime})\}.
\]

All graphs considered in this paper will be  simple, undirected,  
\emph{fully labelled} (\textit{i.e.}, each vertex is labelled), and \emph{positive weighted} (\textit{i.e.}, each edge has a positive length).
A graph is denoted $(V,E;w)$ for a set, $V$,  of labelled vertices and a set, $E$,  of edges that are associated with  a positive edge-weighting function, $w: E\mapsto \mathbb{R}^{+}$.
Given graph $G$, the sets of vertices and edges are denoted $V(G)$ and $E(G)$,  respectively. Moreover, graph $G$ is said to be a graph \emph{on} $V(G)$.
Vertices may be renamed as needed, assuming no confusion arises, and a vertex labelled `$x$' is referred to as vertex $x$. 
The distance in graph $G$ is defined to be the shortest path metric and is represented using $d_G$.

Assume $M$ is a finite metric space, $(X,d_M)$. 
Let $K_M$ be the associated weighted complete graph with $M$.
An edge of $K_M$ that joins two distinct vertices, $x$ and $x^{\prime}$, is denoted $e(x,x^{\prime})$. This paper uses the terms `\emph{points}' and `\emph{vertices}' interchangeably because there is a one-to-one correspondence between $X$ and $V(K_M)$ for any finite metric space $M$.

\subsection{Tie-breaking rule}\label{subsec:tiebreak}
\begin{dfn}\label{dfn:tiebreak}
A finite metric space, $(X,d_M)$, is said to satisfy \emph{the tie-breaking rule} if the values of $d_M$ are distinct for all pairs in $X$.
\end{dfn}

\subsection{The fourth point condition}\label{subsec:4thpc}
\begin{dfn}[Figure~\ref{fig:satisfy4thpc}]\label{dfn:4thPC}
A finite metric space, $(X,d_M)$, is said to satisfy \emph{the
fourth-point condition} if, for every (not necessarily distinct) three
points $x,y,z\in X$, there exists a point, $p^{*}\in X$, such that
\[
d_M(x,p^{*})+d_M(y,p^{*})+d_M(z,p^{*})=\frac{1}{2} \{d_M(x,y)+d_M(y,z)+d_M(z,x)\}.
\]
\end{dfn}

\begin{figure}[htbp]
\centering
\includegraphics[width=0.18\textwidth]{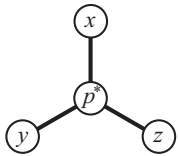}
\caption{Fourth point $p^*$ for triplet $\{x,y,z\}$ \label{fig:satisfy4thpc}}
\end{figure}

\begin{prop}\label{prop:4th_unique}
If a finite metric space, $(X,d_M)$, satisfies the fourth-point condition, 
 fourth point $p^*\in X$ is unique for each triplet in $X$. 
\end{prop}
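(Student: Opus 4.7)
The plan is to translate the fourth-point identity into a geometric statement about metric intervals, compare two hypothetical fourth points through the resulting linear relations, and then close the argument with the tie-breaking rule of Section~\ref{subsec:tiebreak}.

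First I would establish the intermediate claim that any fourth point $p^*$ of a triplet $\{x,y,z\}$ necessarily lies in $I(x,y)\cap I(y,z)\cap I(x,z)$. Summing the three triangle inequalities $d_M(x,p^*)+d_M(p^*,y)\geq d_M(x,y)$, $d_M(y,p^*)+d_M(p^*,z)\geq d_M(y,z)$, and $d_M(x,p^*)+d_M(p^*,z)\geq d_M(x,z)$ yields $2\bigl(d_M(x,p^*)+d_M(y,p^*)+d_M(z,p^*)\bigr)\geq d_M(x,y)+d_M(y,z)+d_M(x,z)$. The equation in Definition~\ref{dfn:4thPC} is precisely the equality case, which forces each triangle inequality to be tight and hence all three interval memberships.

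Second, suppose $p$ and $q$ are both fourth points of $\{x,y,z\}$. Applying the three interval conditions to $p$ and $q$ jointly produces equations in the differences $a:=d_M(x,p)-d_M(x,q)$, $b:=d_M(y,p)-d_M(y,q)$, $c:=d_M(z,p)-d_M(z,q)$, namely $a+b=0$, $b+c=0$, $a+c=0$. This linear system has only the trivial solution, so $p$ and $q$ have identical $d_M$-distances to each of $x,y,z$.

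Finally I would invoke the tie-breaking rule to upgrade this to $p=q$. If $p\neq q$ and some $v\in\{x,y,z\}$ is distinct from both, then $\{v,p\}$ and $\{v,q\}$ are distinct pairs in $X$ of equal $d_M$-value, contradicting Definition~\ref{dfn:tiebreak}; the degenerate cases collapse directly, since e.g.\ $p=x$ forces $d_M(x,q)=0$ and hence $q=p$. The main subtle point here is the reliance on the tie-breaking hypothesis: without it, the uniqueness genuinely fails, as witnessed by the graph metric on $K_{2,3}$, in which the triple of three degree-two vertices admits \emph{both} degree-three vertices as fourth points. Consequently, the argument has to route explicitly through the convention established in Section~\ref{subsec:tiebreak}, and identifying exactly where this assumption is indispensable is the principal obstacle.
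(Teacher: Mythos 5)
Your argument is correct, and it takes a genuinely different---and more careful---route than the paper. The paper's own proof writes down the three triangle inequalities $d_M(v,p_2^{*})\leq d_M(v,p_1^{*})+d_M(p_1^{*},p_2^{*})$ for $v\in\{x,y,z\}$ and then asserts $p_1^{*}=p_2^{*}$; summing those inequalities and using the equality of the two sums yields only $0\leq 3\,d_M(p_1^{*},p_2^{*})$, so the conclusion does not follow. Your first two steps extract what actually does follow from having two fourth points: each lies in $I(x,y)\cap I(y,z)\cap I(z,x)$ (the equality case of the summed triangle inequalities), and the resulting linear system $a+b=b+c=a+c=0$ forces $d_M(v,p)=d_M(v,q)$ for each $v\in\{x,y,z\}$. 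That is the most one can conclude from the fourth-point condition alone.

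Your third step, together with your caveat, is the substantive point. As printed, Proposition~\ref{prop:4th_unique} carries no tie-breaking hypothesis, so strictly speaking you are proving a corrected statement rather than the stated one---but you are right that some such hypothesis is indispensable, and your $K_{2,3}$ witness is valid. With unit edge lengths, every triple in $K_{2,3}$ admits a fourth point (for any triple other than the three degree-two vertices, one of the triple's own points works; for the triple of degree-two vertices, either degree-three vertex works), so Definition~\ref{dfn:4thPC} is satisfied, yet the fourth point of that last triple is not unique. Hence the proposition is false as stated, the paper's proof is a non sequitur at its final line, and your repair---equal distances to $x,y,z$ plus the tie-breaking rule of Definition~\ref{dfn:tiebreak}, with the degenerate cases $p\in\{x,y,z\}$ collapsing via $d_M(x,q)=0$---is the right one. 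Since the tie-breaking rule is in force wherever the paper actually needs uniqueness (Theorem~\ref{thm:1}), the downstream results survive, but the hypothesis should be added to Proposition~\ref{prop:4th_unique} and, consequently, to the uniqueness clause of Proposition~\ref{lem:4thPC_alternative}.
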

\begin{proof}
Suppose that there are two quartets, $\{x,y,z,p_1^*\}$ and $\{x,y,z,p_2^*\}$ ($p_1^*\neq p_2^*$), in $X$ such that 
\[d_M(x,p_1^{*})+d_M(y,p_1^{*})+d_M(z,p_1^{*})= d_M(x,p_2^{*})+d_M(y,p_2^{*})+d_M(z,p_2^{*}).\]
Because $d_M$ is a metric on $X$, we have 
\begin{align*}
d_M(x,p_2^{*})&\leq d_M(x,p_1^{*})+d_M(p_1^{*},p_2^{*});\\
d_M(y,p_2^{*})&\leq d_M(y,p_1^{*})+d_M(p_1^{*},p_2^{*});\\
d_M(z,p_2^{*})&\leq d_M(z,p_1^{*})+d_M(p_1^{*},p_2^{*}).
\end{align*}
Therefore, $p_1^{*}=p_2^{*}$, but this is a contradiction. 
Hence, if $p^*$ exists for $\{x, y, z\}$, it is unique.
\end{proof}

\begin{prop}\label{lem:4thPC_alternative}
The following is equivalent to saying that finite metric space $(X,d_M)$ satisfies the fourth-point condition:  
For every (not necessarily distinct) three points $x, y, z \in X$, there
exists only one point  $p^*\in I(x,y)\cap I(y,z)\cap I(z,x)$.
\end{prop}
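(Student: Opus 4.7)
The plan is to show the equivalence by a direct algebraic unpacking: the sum of the three ``betweenness'' equations that define $p^*\in I(x,y)\cap I(y,z)\cap I(z,x)$ is, term for term, twice the fourth-point equation, so only the triangle inequality is needed to pass back and forth between them.

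First, for the ``$\Leftarrow$'' direction, I would take any $p^*\in I(x,y)\cap I(y,z)\cap I(z,x)$, write down the three defining equalities $d_M(x,y)=d_M(x,p^*)+d_M(p^*,y)$, $d_M(y,z)=d_M(y,p^*)+d_M(p^*,z)$, $d_M(z,x)=d_M(z,p^*)+d_M(p^*,x)$, add them, and divide by $2$; the fourth-point equation falls out immediately. Hence $p^*$ witnesses the fourth-point condition.

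For the harder direction ``$\Rightarrow$'', suppose the fourth-point condition holds with witness $p^*$. The triangle inequality supplies
\begin{align*}
d_M(x,y)&\le d_M(x,p^*)+d_M(p^*,y),\\
d_M(y,z)&\le d_M(y,p^*)+d_M(p^*,z),\\
d_M(z,x)&\le d_M(z,p^*)+d_M(p^*,x),
\end{align*}
and summing yields $d_M(x,y)+d_M(y,z)+d_M(z,x)\le 2\{d_M(x,p^*)+d_M(y,p^*)+d_M(z,p^*)\}$. By hypothesis this is an equality, so each of the three triangle inequalities must individually be tight; equivalently, $p^*$ lies simultaneously in $I(x,y)$, $I(y,z)$, and $I(z,x)$. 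Uniqueness is then immediate from Proposition~\ref{prop:4th_unique}, which tells us the fourth point is unique whenever it exists; alternatively one could repeat that argument in the interval formulation.

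The only subtle point to flag is the case where the three vertices $x,y,z$ are not distinct (permitted by the statement of Definition~\ref{dfn:4thPC}): I would briefly note that in such a degenerate triple the interval condition and the fourth-point equation both collapse to forcing $p^*$ to coincide with the repeated vertex (or with any of $\{x,y\}$ on the segment between them), so the equivalence remains valid without modification. Beyond this, no real obstacle arises — the proof is essentially a one-line rearrangement once the triangle inequality is invoked.
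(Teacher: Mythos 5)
Your proof is correct and follows essentially the same route as the paper's: sum the three triangle inequalities involving $p^*$, observe that the fourth-point equality forces each one to be tight (hence $p^*\in I(x,y)\cap I(y,z)\cap I(z,x)$ and conversely), and appeal to Proposition~\ref{prop:4th_unique} for uniqueness. Your treatment is merely a little more explicit about the two directions and the degenerate triples.
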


\begin{proof}
Because $d_M$ is a metric, for all $x,y,z,p\in X$, we have 
$d_M(x,p)+d_M(y,p)+d_M(z,p)\geq \frac{1}{2}\{d_M(x,y)+d_M(y,z)+d_M(z,x)\}$. 
The equality holds if and only if
$I(x,y)\cap I(y,z)\cap I(z,x)=\{p^*\}$. 
Proposition \ref{prop:4th_unique} ensures the uniqueness of $p^*$.
\end{proof}

\begin{rem}\label{rem:median}
Fourth point $p^*$ is also known as \emph{the median} for $\{x,y,z\}$  
because it minimises the sum of the distances to the three points, and 
a metric space satisfying the fourth-point condition (or a graph inducing this kind of metric space) is said to be \emph{median}~\cite{Ba,DD}.
Although a discussion of this topic is provided in~\cite{Ba,BCE}, it should be noted that median graphs include multiple types of graphs other than trees, such as grid and square graphs.
\end{rem}

\begin{lem}\label{lem:cycle}
Let $C$ be a cycle graph, $(V,E;w)$, with $\sum_{e\in E} w(e) =c$. 
Also, let $d_C$ be the shortest path metric in $C$.
Given three distinct points  $x,y,z\in V$  such that 
$d_C(x,y)+d_C(y,z)+d_C(z,x)=c$, 
the fourth point, $p^*$, exists in $V$  
if and only if $\max\{d_C(x,y),d_C(y,z),d_C(z,x)\}=c/2$.
\end{lem}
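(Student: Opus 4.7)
The plan is to replace the fourth-point equation with the combinatorial characterization from Proposition~\ref{lem:4thPC_alternative}, so that $p^*$ exists for $\{x,y,z\}$ if and only if $I(x,y)\cap I(y,z)\cap I(z,x)\neq\emptyset$, and then compute these intervals directly on the cycle. I would fix a cyclic order on $V$ and let $a,b,c'$ denote the weights of the three arcs into which $\{x,y,z\}$ partition $C$: $a$ for the $xy$-arc avoiding $z$, $b$ for the $yz$-arc avoiding $x$, and $c'$ for the $zx$-arc avoiding $y$, so that $a+b+c'=c$. Since $d_C(x,y)\leq a$, $d_C(y,z)\leq b$ and $d_C(z,x)\leq c'$, the hypothesis $d_C(x,y)+d_C(y,z)+d_C(z,x)=c$ forces equality in all three of these inequalities, which in turn forces $a,b,c'\leq c/2$.

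Next I would split into the cases $\max\{a,b,c'\}<c/2$ and $\max\{a,b,c'\}=c/2$. In the strict case each of the three shortest arcs is unique, so $I(x,y)$, $I(y,z)$, $I(z,x)$ consist exactly of the vertices on the corresponding short arc. The short $xy$-arc and the short $yz$-arc share only the vertex $y$, so $I(x,y)\cap I(y,z)=\{y\}$; but $y\notin I(z,x)$ because traversing from $z$ to $x$ through $y$ has length $a+b=c-c'>c'=d_C(z,x)$. Hence the triple intersection is empty and no $p^*$ exists.

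In the equality case, assume without loss of generality that $c'=c/2$, so $a+b=c/2$ and $a,b<c/2$. Then both arcs from $z$ to $x$ now have length $c/2$, so every vertex of $V$ lies on some shortest $zx$-path and $I(z,x)=V$. The unique-arc argument still gives $I(x,y)\cap I(y,z)=\{y\}$, and since $y\in V=I(z,x)$, the triple intersection is exactly $\{y\}$, so $p^*=y$ does exist. Combining the two cases yields the biconditional.

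The main delicate point will be the sudden jump in $I(z,x)$ from a single short arc (when $c'<c/2$) to the whole vertex set $V$ (when $c'=c/2$), since this is exactly the mechanism by which a fourth point appears. Everything else amounts to careful bookkeeping on the three arcs, which is made transparent by the reformulation through shortest-path intervals.
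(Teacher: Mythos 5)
Your proof is correct and follows essentially the same route as the paper: both reduce the existence of $p^*$ to the non-emptiness of $I(x,y)\cap I(y,z)\cap I(z,x)$ via Proposition~\ref{lem:4thPC_alternative}, identify $y$ as the only possible median, and observe that $y\in I(z,x)$ exactly when the largest of the three distances equals $c/2$. Your arc-weight bookkeeping in fact makes explicit a step the paper leaves implicit, namely that $I(x,y)\cap I(y,z)=\{y\}$, which is needed for the ``if'' direction of the paper's claim that the triple intersection is empty iff $y\notin I(z,x)$.
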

\begin{proof}
Without loss of generality, we can assume $d_C(z,x)=\max\{d_C(x,y), d_C(y,z), d_C(z,x)\}$. Clearly, $y\in I(x,y)\cap I(y,z)$. Therefore, $I(x,y)\cap I(y,z)\cap I(z,x)=\emptyset$ if and only if $y\not\in I(z,x)$. 
Under the assumption that the length of $C$ is fixed at $c$, 
this is equivalent to stating that $d_C(z,x)\neq c/2$. 
Thus, $I(x,y)\cap I(y,z)\cap I(z,x)= \emptyset$ if and only if $d_C(z,x)\neq c/2$. Applying Proposition~\ref{lem:4thPC_alternative} completes the proof.
\end{proof}

\subsection{Basic geodesic graphs}\label{subsec:basic}
In this subsection, we present a simple trick for metric-preserving edge removal, which can be used to represent an arbitrary finite  metric space  as a graph with the fewest edges. 
Let $M$ be a finite metric space, $(X,d_M)$, 
and assume $K_M$ is the weighted complete graph associated with $M$.

\begin{dfn}\label{dfn:realise}
Suppose $G$ is a connected graph on finite set $X$ with shortest path metric $d_G$.
Graph $G$ is said to \emph{realise $M$} if
$d_G(x,x^{\prime}) = d_M(x,x^{\prime})$  
for all  $x,x^{\prime}\in X$.
\end{dfn}

\begin{dfn}\label{dfn:circularperm}
Given $x,x^{\prime}\in X$, 
the edge, $e(x,x^{\prime})$, of $K_M$ is said to be \emph{non-basic} 
if there is a  permutation, $(x_1, \ x_2, \ \cdots, \ x_k)$, on a non-empty subset of $X\setminus\{x,x^{\prime}\}$ such that cyclic permutation $(x, \ x_1, \ x_2, \ \cdots , \ x_k, \ x^{\prime})$  satisfies 
\[
d_M(x,x^{\prime})=d_M(x,x_1)+d_M(x_1,x_2)+...+d_M(x_k,\,x^{\prime}). 
\]
The edge is called \emph{basic} otherwise.
\end{dfn}

\begin{prop}\label{rem:4thpoint_notexist}
Let $x,y,z$ be  three different vertices of $K_M$.
When the three edges, $e(x,y)$, $e(y,z)$, and $e(z,x)$, of $K_M$ are basic, the fourth point, $p^*$,   does not exist for $\{x,y,z\}$. 
If a non-basic edge exists, say $e(x,y)$,   
points  $x$ and $y$ are the only two candidates for $p^*$.
\end{prop}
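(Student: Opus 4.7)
The plan is to invoke Proposition~\ref{lem:4thPC_alternative}, so that the existence of $p^*$ for the triple $\{x,y,z\}$ is equivalent to the nonemptiness of $I(x,y)\cap I(y,z)\cap I(z,x)$. A crucial preliminary observation is that an edge $e(a,b)$ of $K_M$ is basic if and only if $I(a,b)=\{a,b\}$. The forward direction is exactly the $k=1$ case of Definition~\ref{dfn:circularperm}. The converse follows because if $e(a,b)$ admits a length-$k$ bypass chain $(x_1,\ldots,x_k)$, then an iterated application of the triangle inequality forces $d_M(a,b)=d_M(a,x_i)+d_M(x_i,b)$ for every $i$; hence each $x_i$ sits in $I(a,b)\setminus\{a,b\}$, witnessing non-basicness from the interval side.

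For the first claim, assume all three edges $e(x,y),e(y,z),e(z,x)$ are basic. Then the three intervals reduce to $\{x,y\}$, $\{y,z\}$, and $\{z,x\}$ respectively, and their triple intersection is empty because $x,y,z$ are pairwise distinct. By Proposition~\ref{lem:4thPC_alternative}, no $p^*$ can exist for $\{x,y,z\}$.

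For the second claim, I would proceed by splitting on the location of a hypothetical $p^*$, namely $p^*\in\{x,y,z\}$ versus $p^*\in X\setminus\{x,y,z\}$, and using the preceding interval--basicness dictionary to translate each case into a structural statement about the three triangle edges. The trivial memberships $a\in I(a,b)$ and $b\in I(a,b)$ mean that $p^*=x$ corresponds to $x\in I(y,z)$, which forces $e(y,z)$ to be non-basic through the bypass vertex $x$; symmetrically for $p^*=y$ and $p^*=z$. The remaining case $p^*\in X\setminus\{x,y,z\}$ forces $p^*$ to be a common bypass vertex of all three edges, rendering each of $e(x,y),e(y,z),e(z,x)$ non-basic. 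Feeding in the hypothesis that $e(x,y)$ is (the) non-basic edge guaranteed by the statement, I would then read off which of these four configurations remain admissible and identify the resulting candidates $x$ and $y$.

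The main obstacle I anticipate is the bookkeeping in the second part: cleanly ensuring that the interval-membership constraints, once translated into non-basic decompositions, leave no candidates other than $x$ and $y$ under the stated hypothesis. The first part, in contrast, reduces to a one-line set computation once the interval--basicness equivalence is established.
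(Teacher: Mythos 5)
The paper gives no written proof of this proposition (it is declared straightforward), so there is nothing to compare against except the statement itself. Your dictionary ``$e(a,b)$ is basic $\Leftrightarrow I(a,b)=\{a,b\}$'' is correct (the $k=1$ case of Definition~\ref{dfn:circularperm} in one direction, the iterated triangle inequality in the other), and your proof of the first claim is complete: three basic edges give $I(x,y)\cap I(y,z)\cap I(z,x)=\{x,y\}\cap\{y,z\}\cap\{z,x\}=\emptyset$, and the pointwise content of Proposition~\ref{lem:4thPC_alternative} rules out $p^*$.

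The second half has a genuine gap, and it is one your own case analysis exposes. You correctly record that $p^*=x$ forces $x\in I(y,z)$, i.e.\ $e(y,z)$ non-basic; $p^*=y$ forces $e(z,x)$ non-basic; $p^*=z$ forces $e(x,y)$ non-basic; and $p^*\notin\{x,y,z\}$ forces all three edges non-basic. Feeding in the hypothesis that $e(x,y)$ is the non-basic edge of the triangle (the other two being basic), the only configuration that survives is $p^*=z$ --- not $p^*\in\{x,y\}$. So the promised final step ``read off the resulting candidates $x$ and $y$'' cannot be carried out; your bookkeeping terminates at the opposite vertex. Concretely, take $X=\{x,y,z\}$ with $d_M(x,z)=d_M(z,y)=1$ and $d_M(x,y)=2$: then $e(x,y)$ is the unique non-basic edge and the fourth point for $\{x,y,z\}$ is $z$, so the printed second sentence is false as literally stated. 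What your argument actually establishes (and what is consistent with the way medians are located in Lemma~\ref{lem:cycle}, where the candidate $y$ is the vertex opposite the long edge $e(z,x)$) is: if $e(x,y)$ is \emph{basic} then $p^*\in I(x,y)=\{x,y\}$, so the candidates for $p^*$ are confined to the common endpoints of the basic edges of the triangle; equivalently, when exactly one edge is non-basic the opposite vertex is the unique candidate, and when all candidates outside $\{x,y,z\}$ are to survive, all three edges must be non-basic. You should prove that corrected statement, or explicitly flag the label discrepancy, rather than asserting that the case analysis will land on $x$ and $y$.
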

The proof of this proposition is straightforward.

\begin{dfn}\label{dfn:basicgeodesicgraph}
Assume $B_M$ is the set of all basic edges of $K_M$, 
and suppose $\lambda$ is  a restriction of $d_M$ to $B_M$. 
A subgraph, $G_M:=(X, B_M; \lambda)$, in $K_M$ is called  
\emph{the basic geodesic graph in $K_M$}. 
\end{dfn}

\begin{lem}\label{lem:connected}
The basic geodesic graph, $G_M$, in $K_M$ is a connected graph on $X$ that realises $M$.
\end{lem}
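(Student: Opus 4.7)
My approach would be to establish, for every ordered pair $(x,x')$ of points in $X$, that $G_M$ contains a walk from $x$ to $x'$ of length exactly $d_M(x,x')$. Once this is proved, connectedness is immediate, the inequality $d_{G_M}(x,x') \leq d_M(x,x')$ follows from the walk itself, and the reverse inequality $d_{G_M}(x,x') \geq d_M(x,x')$ drops out because $G_M$ is a spanning subgraph of $K_M$ whose edge-weights coincide with $d_M$, while the triangle inequality in $M$ forces every $K_M$-walk from $x$ to $x'$ to have total length at least $d_M(x,x')$.

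The existence of such walks would be handled by strong induction on the value $d_M(x,x')$, which takes only finitely many values. For the base case I would take any pair $(x,x')$ minimising $d_M$ over distinct points. If $e(x,x')$ were non-basic, Definition~\ref{dfn:circularperm} would produce $x_1, \ldots, x_k$ with $k \geq 1$ and
\[
d_M(x,x') = d_M(x,x_1) + d_M(x_1,x_2) + \cdots + d_M(x_k,x'),
\]
a sum of at least two strictly positive terms, each bounded below by $d_M(x,x')$ by minimality; the total would then exceed $d_M(x,x')$, a contradiction. Hence the minimum-distance edge is basic and belongs to $G_M$.

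For the inductive step, assume the conclusion for every pair whose $d_M$-value is strictly less than $d_M(x,x')$. If $e(x,x')$ is basic, it lies in $G_M$ and we are done. Otherwise, the decomposition supplied by Definition~\ref{dfn:circularperm} expresses $d_M(x,x')$ as a sum of at least two positive summands, so each individual summand is strictly smaller than $d_M(x,x')$. The induction hypothesis then yields $G_M$-walks of the required lengths between every pair of consecutive vertices in the permutation, and concatenating these walks produces a walk in $G_M$ from $x$ to $x'$ of total length $d_M(x,x')$.

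The only delicate point is ensuring that the induction is well-founded in the presence of possible ties among distances; this is taken care of by the observation just made, namely that any non-trivial decomposition of $d_M(x,x')$ into positive summands uses summands strictly smaller than $d_M(x,x')$, so no tie-breaking hypothesis is required. Everything else is bookkeeping.
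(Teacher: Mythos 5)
Your proof is correct, and it takes a genuinely different route from the paper's. You argue by strong induction on the (finitely many) values of $d_M$: a non-basic edge $e(x,x')$ decomposes, via Definition~\ref{dfn:circularperm}, into at least two strictly positive summands, each therefore strictly smaller than $d_M(x,x')$, so the induction hypothesis supplies exact-length $G_M$-walks between consecutive vertices of the permutation, and concatenation gives a $G_M$-walk from $x$ to $x'$ of length exactly $d_M(x,x')$; the reverse inequality $d_{G_M}\geq d_M$ is free from the triangle inequality, since $G_M$ is a subgraph of $K_M$ with edge weights equal to $d_M$. The paper instead fixes, for each non-basic edge $e(x,x')$, a cycle of length $2d_M(x,x')$ through that edge with the maximum number of vertices, and shows by contradiction (comparing it with a second cycle through a hypothetical non-basic edge $e(y,y')$ and exploiting the vertex-maximality to force an intersection) that all remaining edges of that cycle are already basic. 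Your induction is shorter and sidesteps the delicate cycle-maximality and two-cycle comparison entirely; its well-foundedness is exactly the observation you flag, that every nontrivial decomposition uses strictly smaller distances, so no tie-breaking is needed. What the paper's argument buys in exchange is a single geodesic cycle of basic edges through each non-basic edge in one step, rather than a recursively assembled walk; but since a walk of length exactly $d_M(x,x')$ in a graph with $d_{G_M}\geq d_M$ and positive weights cannot revisit a vertex, your construction yields a simple basic path as well. One small presentational point: it is worth stating explicitly that the summands in the decomposition are positive because the points $x,x_1,\dots,x_k,x'$ are pairwise distinct (the permutation is on a subset of $X\setminus\{x,x'\}$), which is what makes ``at least two positive summands'' available.
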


\begin{proof}
It suffices to prove that $G_M$ is connected. 
Assuming that $e(x,x^{\prime})$ is non-basic, we show that there is a path of basic edges joining $x$ and $x^{\prime}$ in $K_M$. We also note that they are obviously connected in $G_M$ if $e(x,x^{\prime})\in E(K_M)$ is basic. 
Let $C$ be a cycle with the greatest number  of vertices (or edges) of all cycles in $K_M$ that share edge $e(x,x^{\prime})$ and overall length $2d_M(x,x^{\prime})$. Let  $V(C)=\{x,x^{\prime}\}\cup Y$, where   $Y:=\{x_1,\cdots,x_k\}$ is a non-empty subset of $X\setminus\{x,x^{\prime}\}$, as in  Definition~\ref{dfn:circularperm}. Furthermore, suppose $d_C$ is the shortest path metric induced by $C$ and $x_i, x_j\in V(C)$. 
If a path existed in $K_M$ joining $x_i$ and $x_j$ that was shorter than  $d_C(x_i,x_j)$, then edge $e(x,x^{\prime})$ would be longer than the path connecting  $x$ and $x^{\prime}$ through  $x_i$ and $x_j$. Therefore, any path in $K_M$ joining two vertices in $V(C)$  must have a length  greater than or equal to $d_C(x_i,x_j)$.  We use this fact  at the end of the proof.

In order to obtain a contradiction, we suppose $e(y,y^{\prime})\in E(C)\setminus e(x,x^{\prime})$  is  non-basic. We define   $C^{\prime}$  to be a cycle in $K_M$ of overall length $2d_M(y,y^{\prime})$  with   $e(y,y^{\prime})\in E(C^{\prime})$, which is similar to our previous case except that $|V(C^{\prime})|$ is unimportant. Let $V(C^{\prime})=\{y,y^{\prime}\}\cup Z$, where $Z:=\{y_1,\cdots,y_l\}\subseteq X\setminus\{y, y^{\prime}\}$.  By Definition \ref{dfn:circularperm}, if a cycle contains a non-basic edge, then it must be strictly longer than the other edges in the cycle. This implies that the number of non-basic edges contained in each cycle is zero or one. Thus, $e(y,y^{\prime})$ is shorter than $e(x,x^{\prime})$, and $e(y,y^{\prime})$ is the longest edge in $E(C^{\prime})$. Therefore, we can conclude that $e(x,x^{\prime})$ is not in $E(C^{\prime})$.
The assumption on $|V(C)|$ provides $Y\cap Z\neq\emptyset$. Our hypothesis ensures a path in $K_M$ of length $d_C(y,y^{\prime})$ that connects  $y$ and $y^{\prime}$ via $y^{\prime\prime}\in Y\cap Z$. This implies that  $K_M$ contains a path joining  $y$ and $y^{\prime\prime}$  of length less than $d_C(y,y^{\prime})$. If we  assume that  $y^{\prime}$ lies in the shortest path joining $y$ and $y^{\prime\prime}$ in $C$ (note that the roles of $y$ and $y^{\prime}$ can be exchanged), then we have  $d_C(y,y^{\prime})<d_C(y,y^{\prime\prime})$. It follows that there is a path that joins $y$ and $y^{\prime\prime}$ in $K_M$ of length less than $d_C(y,y^{\prime\prime})$. This is a contradiction. Hence, $e(y,y^{\prime})$ is  basic, which completes the proof.
\end{proof}

\begin{dfn}\label{dfn:treepath}
Finite metric space $M$ is said to be a \emph{spanning tree metric space} if the basic geodesic graph, $G_M$,  in the weighted complete graph, $K_M$, is a spanning subtree in $K_M$.  In particular, $M$ is said to be a \emph{spanning path metric space} if $G_M$ is a \emph{path graph} (\textit{i.e.}, a tree with two vertices of degree one and remaining vertices of degree two) that spans all the vertices of $K_M$. 
\end{dfn}

\begin{prop}\label{prop:1}
Let $M:=(X,d_M)$ be a spanning tree metric space and 
$G_M$ be the basic geodesic graph in  $K_M$. Then the following statements hold:
\begin{enumerate}
\item $G_M$ is the unique minimum spanning tree in $K_M$;
\item $G_M$ is the unique fully labelled tree  on $X$ that realises $M$.
\end{enumerate}
\end{prop}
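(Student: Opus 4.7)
The plan is to handle the two assertions separately, each time exploiting the fact that, by hypothesis together with Lemma \ref{lem:connected}, $G_M$ is itself a spanning tree on $X$ that realises $M$. For (1) I will invoke the standard cycle property of minimum spanning trees: if $e$ is the strict maximum-weight edge in some cycle of $K_M$, then $e$ lies in no MST. Let $e=e(x,x')$ be any non-basic edge of $K_M$. Since $G_M$ realises $M$, the unique $G_M$-path $P_e$ from $x$ to $x'$ has total length $d_M(x,x')=w(e)$; moreover $P_e$ consists of at least two positively weighted edges, because $e\notin E(G_M)$. Hence $w(e)>w(e'')$ for every $e''\in E(P_e)$, and in the cycle $\{e\}\cup E(P_e)$ the edge $e$ is the strict maximum, so it belongs to no MST. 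Consequently every MST of $K_M$ is a subgraph of $G_M$, and since $G_M$ itself is a spanning tree with $|X|-1$ edges, $G_M$ is the unique MST.

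For (2), let $T$ be any fully labelled tree on $X$ realising $M$. For each $(u,v)\in E(T)$ the unique $T$-path from $u$ to $v$ is the single edge $(u,v)$, so the weight of that edge equals $d_T(u,v)=d_M(u,v)$; hence $T=G_M$ will follow as soon as $E(T)=E(G_M)$. Suppose for contradiction that some edge $e(x,x')\in E(T)$ is non-basic. By Definition \ref{dfn:circularperm} there exist intermediate vertices $x_1,\dots,x_k\in X\setminus\{x,x'\}$ satisfying $d_M(x,x')=d_M(x,x_1)+d_M(x_1,x_2)+\cdots+d_M(x_k,x')$; combined with the triangle inequality this forces $d_M(x,x_1)+d_M(x_1,x')=d_M(x,x')$, so $x_1\in I(x,x')\setminus\{x,x'\}$. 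Then $d_T(x,x_1)+d_T(x_1,x')=d_M(x,x_1)+d_M(x_1,x')=d_T(x,x')$, which requires $x_1$ to lie on the unique $T$-path from $x$ to $x'$. But that path is the single edge $e(x,x')$, which does not pass through the distinct vertex $x_1$---a contradiction. Hence $E(T)\subseteq B_M=E(G_M)$, and since $T$ and $G_M$ are both spanning trees on the $|X|$ vertices, $T=G_M$.

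The harder step is (2), since it must rule out any tree that realises $M$ but contains a non-basic edge. The crux is to extract a single intermediate vertex $x_1\in I(x,x')$ from any non-basic cyclic permutation; the resulting tree-metric identity $d_T(x,x_1)+d_T(x_1,x')=d_T(x,x')$ is incompatible with $e(x,x')$ being itself the unique $T$-path between its endpoints, and this is exactly what collapses $E(T)$ into $B_M$.
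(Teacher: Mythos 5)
Your proof is correct and follows essentially the same route as the paper's: part (1) rests on the observation that the unique $G_M$-path replacing a non-basic edge consists of at least two strictly shorter edges (you package this as the cycle property of MSTs, the paper as an edge-exchange argument), and part (2) identifies the edge set of any realising tree with the set of basic edges. If anything, your part (2) is more explicit than the paper's, which only asserts that ``recovering $K_M$ from $T$'' shows $T$ is isomorphic to $G_M$; your argument via an interior point $x_1\in I(x,x')$ forced onto the one-edge tree path supplies the detail the paper leaves implicit.
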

\begin{proof}
(1) Assume $B:=E(G_M)$ and let $\overline{B}:=E(K_M)\setminus B$. Because $|B|=|X|-1$,  $G_M$ is the only spanning tree in $K_M$ such that all edges are basic.
In addition, let $e(x,x^{\prime})\in \overline{B}$.  Because $G_M$ is a tree, there is a  unique path  joining $x$ and $x^{\prime}$, denoted $P$. Each edge of $P$ must be  strictly shorter than $d_M(x,x^{\prime})$ for the following reasons: the length of $P$ equals $d_M(x,x^{\prime})$; the number of edges of $P$ exceeds one; and the edge weights are  all positive. Therefore,  replacing an arbitrary edge of $P$  with $e(x,x^{\prime})$ results in a spanning tree in $K_M$ of greater length. Hence, $G_M$ is shorter than any other spanning trees in $K_M$. 
(2) Suppose that $M$ is realised by fully labelled tree $T$ on $X$.  This implies that each edge of $T$ has a positive weight. 
We can recover $K_M$ from $T$ by summing the weights along every path in $T$ that has  two or more edges.
This process indicates that  $T$ is isomorphic to  the basic geodesic graph  in $K_M$. Hence, given (1), we know $T$ is unique.
\end{proof}
\begin{rem}
Proposition~\ref{prop:1} states that a metric space is uniquely realised by the only MST if it is a spanning tree metric space. Note that we do not need Buneman's four-point condition in  the argument ({\it cf.}~\cite{A}).
Concerning the uniqueness of the MST, the tie-breaking rule is a well-known sufficient condition established by Bor\r{u}vka~\cite{Bo} (cited in~\cite{K}) and by Kruskal~\cite{K}. The next section explores its relation to spanning tree metric spaces.
\end{rem}

\section{Main results}\label{sec:spanningtree}

\begin{thm}\label{thm:1}
Let $M$ be a finite metric space, $(X,d_M)$, under the tie-breaking rule.  
Then $M$  is a spanning tree metric space if and only if it satisfies the fourth-point condition. 
\end{thm}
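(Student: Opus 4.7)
The plan is to prove the two directions separately.

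For $(\Rightarrow)$, suppose $G_M$ is a spanning tree realising $M$. I use the classical tree structure: for any $x, y, z \in X$, the three pairwise geodesics in $G_M$ share a unique common vertex $p^*$ (the median), so $d_M(x,y) = d_M(x, p^*) + d_M(p^*, y)$ and similarly for the other two pairs. Adding the three identities yields $d_M(x, p^*) + d_M(y, p^*) + d_M(z, p^*) = \frac{1}{2}[d_M(x,y) + d_M(y,z) + d_M(z,x)]$, so $p^*$ witnesses the fourth-point condition. Tie-breaking plays no role in this direction.

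For $(\Leftarrow)$, I argue by contradiction: assume tie-breaking and the fourth-point condition, and suppose $G_M$ has a cycle. By Lemma~\ref{lem:connected}, $G_M$ is connected, so it suffices to exclude cycles. Let $C = v_1 v_2 \cdots v_m$ be a cycle of minimum total weight $L$. I first record two basic facts: (i) $C$ is isometric in $G_M$, i.e. $d_M|_{V(C)} = d_C$, by the standard argument that a strictly shorter shortcut between two vertices of $V(C)$ combined with either arc of $C$ would produce a strictly shorter cycle; and (ii) each edge weight $e_i := d_M(v_i, v_{i+1})$ satisfies $e_i < L/2$ strictly, for otherwise the complementary arc would witness $e(v_i, v_{i+1})$ as non-basic, contradicting $C \subseteq G_M$. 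When $m = 3$, all three edges of $C$ are basic and Proposition~\ref{rem:4thpoint_notexist} forbids a fourth point for $\{v_1, v_2, v_3\}$, contradicting the fourth-point condition.

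The case $m \geq 4$ is the substance. For each $j \in \{2, \ldots, m-1\}$, I apply the fourth-point condition to $\{v_1, v_j, v_{j+1}\}$: since $e(v_j, v_{j+1})$ is basic, the candidate set $I(v_j, v_{j+1}) = \{v_j, v_{j+1}\}$ forces $p^* \in \{v_j, v_{j+1}\}$. Writing $s_j := \sum_{i=1}^{j-1} e_i$ for the forward arc length from $v_1$ to $v_j$ and using $d_M(v_1, v_k) = \min(s_k, L - s_k)$ from isometry, a short case analysis yields the dichotomy: $p^* = v_j$ is consistent iff $s_{j+1} \leq L/2$, and $p^* = v_{j+1}$ is consistent iff $s_j \geq L/2$. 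Since $(s_j)$ is strictly increasing from $s_2 = e_1 < L/2$ to $s_m = L - e_m > L/2$, any strict crossing $s_j < L/2 < s_{j+1}$ would make both candidates inconsistent, so the fourth-point condition forces $s_{j^*} = L/2$ exactly for some $j^* \in \{3, \ldots, m-1\}$, yielding $d_M(v_1, v_{j^*}) = L/2$. Running the same argument from the base vertex $v_2$ produces $j^{**}$ with $d_M(v_2, v_{j^{**}}) = L/2$, and the strict bound $e_i < L/2$ rules out the degenerate matches $j^* = 2$ and $j^{**} = 1$; hence $\{v_1, v_{j^*}\}$ and $\{v_2, v_{j^{**}}\}$ are distinct pairs sharing the common distance $L/2$, violating the tie-breaking rule.

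The main obstacle I anticipate is the case-analysis in the $m \geq 4$ step: verifying the precise ``iff'' dichotomy for $p^* \in \{v_j, v_{j+1}\}$ via the isometric embedding of $C$, and then using the strict monotonicity of $(s_j)$ to rule out a skip across $L/2$ and thereby force an exact hit. Once this is in place, symmetry and tie-breaking close the argument without further difficulty.
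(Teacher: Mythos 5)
Your proof is correct. The forward direction matches what the paper takes for granted (the tree median witnesses the fourth point). For the converse, your skeleton coincides with the paper's up to a point: both take a minimum-weight cycle $C$ in the basic geodesic graph $G_M$, use that $C$ is isometric and that all its edges are basic (hence each $e_i<c/2$, and $|V(C)|\geq 4$ by Proposition~\ref{rem:4thpoint_notexist}), and then look for the place where the running arc-length crosses the halfway point $c/2$. From there the two arguments genuinely diverge. The paper's engine is Lemma~\ref{lem:cycle}: it picks the single straddling triple $\{s,t,u\}$, deduces $d_M(u,s)=c/2$ from the existence of its median, and then invokes the tie-breaking rule to build a second triple $\{s,t,v\}$ of perimeter $c$ in which no distance equals $c/2$, so Lemma~\ref{lem:cycle} denies it a fourth point. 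You bypass Lemma~\ref{lem:cycle} entirely: via Propositions~\ref{lem:4thPC_alternative} and~\ref{rem:4thpoint_notexist} you confine the median of each consecutive triple $\{v_1,v_j,v_{j+1}\}$ to $\{v_j,v_{j+1}\}$, and the resulting dichotomy ($s_{j+1}\leq c/2$ or $s_j\geq c/2$) applied to the strictly increasing sequence $(s_j)$ forces an exact antipodal hit $d_M(v_1,v_{j^*})=c/2$; a second sweep from $v_2$ produces a distinct pair at the same distance, contradicting tie-breaking directly. Your route makes the ``unique crossing of $c/2$'' step fully explicit where the paper's L/R labelling is somewhat informal, and it isolates the use of tie-breaking to a single clean step at the end; the paper's route is shorter once Lemma~\ref{lem:cycle} is available, needing only two carefully chosen triples rather than a scan of all consecutive ones. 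Both arguments are sound, and your verified dichotomy (including the exclusion of the degenerate indices via $e_i<c/2$) closes the one place where your proof could have leaked.
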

\begin{proof}
(i)  The fourth-point condition clearly holds for all spanning tree metric spaces.
(ii) If $d_M$ is not a spanning tree  metric on $X$, then  
we will show that there is a triplet in $X$ that violates the fourth-point condition. 
According to Lemma \ref{lem:connected}, our assumption  implies that the basic geodesic graph, $G_M=(X,B;\lambda)$, in $K_M$ contains at least one cycle. 
Suppose $C:=(X_k,B_k; \lambda_k)$ is the shortest cycle  in $G_M$,  
where  $X_k\subseteq X$, $B_k\subseteq B$, $|X_k|=|B_k|=k$, and $\lambda_k$ is the restriction of $\lambda$ to $B_k$. Then Proposition \ref{rem:4thpoint_notexist} yields $k\geq 4$. Let $c$ denote the sum of the $\lambda_k$ over all elements in $B_k$. Also, assume that $d_C$ is the shortest path metric in $C$.
For all $i,j\in X_k$, no path in $G_M$ joining $i$ and $j$ has a shorter length than  $d_C(i,j)$  (otherwise, $C$ would not be the shortest cycle in $G_M$).
Therefore, $d_C(i,j)=\min \{a_{ij},c-a_{ij}\}$,  
in which $a_{ij}$  represents the length of the path in $C$ that travels from $i$ to $j$ in a clockwise direction. 

Consider a route in which we visit the points in $X_k$. Let $s\in X_k$ be the starting point from which we travel along the circle in a clockwise direction. 
We assign a label, `$\mathrm{L}$' or `$\mathrm{R}$', to every point $i\in X_k\setminus\{s\}$: label `$\mathrm{L}$' is assigned if $a_{si} < c/2$, and we use label  `$\mathrm{R}$'  if $a_{si}\geq c/2$.
If every point in $X_k\setminus\{s\}$ was labelled `$\mathrm{L}$', the last edge we would traverse returning to $s$ would be non-geodesic or non-basic. 
Therefore, there exists one and only one basic edge between vertices labelled `$\mathrm{L}$' and `$\mathrm{R}$'. Suppose that $t$ signifies the last point with label `$\mathrm{L}$' and $u$ indicates the first point with label `$\mathrm{R}$' as on the left  in Figure \ref{fig:thm1}. Note that  $d_M(s,t)+d_M(t,u)+d_M(u,s)=c$. 

We  assume that  $p^*$ exists for $\{s,t,u\}$ (otherwise, the assertion of the theorem immediately follows). Lemma~\ref{lem:cycle} gives us $\max\{d_M(s,t),d_M(t,u),d_M(u,s)\}=c/2$. Thus,  $d_M(u,s)=c/2$ (the edge joining $t$ and $u$ is basic, and $d_M(s,t)<c/2$). Let $v (\neq u)$ be a point  in $X_k$ with label `$\mathrm{R}$' that is between $u$ and $s$ as on the right in Figure \ref{fig:thm1}. We know point $v$ exists because $e(u,s)$ would be non-basic otherwise. According to the tie-breaking rule, we note that $a_{tv} \neq c-a_{tv}$. We can also set $a_{tv} < c-a_{tv}$ in order to select $\{s,t,v\}$. Although we should select $\{t,u,v\}$ when  $a_{tv} > c-a_{tv}$, we limit our consideration to the former case. Therefore, we have $d_M(s,t)+d_M(t,v)+d_M(v,s)=c$ again, but each of the three terms does not equal $c/2$ (recall that  $d_M(s,u)=c/2$). Hence, Lemma~\ref{lem:cycle} implies that  $p^*$ does not exist for $\{s,t,v\}$, and this completes the proof.
\end{proof}

\begin{figure}[htbp]
\centering
\includegraphics[width=0.4\textwidth]{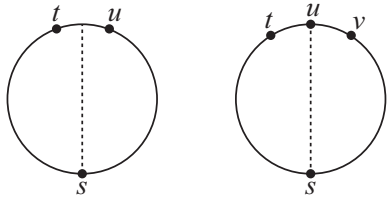}
\caption{Points in the proof of Theorem~\ref{thm:1} \label{fig:thm1}}
\end{figure}

\begin{rem}
Given a finite metric space on $X$, we can determine in $O(|X|^4)$ time whether it is a spanning tree metric space.
\end{rem}

\begin{cor}\label{cor:2}
Let $G$ be a median graph on finite set $X$ and  let $d_G$ be the shortest path metric of $G$. If each pair in $X$ has a different value for $d_G$, then $G$ is a tree.
\end{cor}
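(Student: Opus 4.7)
The plan is a direct application of Theorem \ref{thm:1}. I would set $M := (X, d_G)$ and verify the two hypotheses of the theorem. First, the tie-breaking rule for $M$ (Definition \ref{dfn:tiebreak}) holds by the assumption that distinct pairs in $X$ have distinct values of $d_G$. Second, since $G$ is median, the shortest-path metric $d_G$ satisfies the fourth-point condition by the identification recorded in Remark \ref{rem:median}. Hence Theorem \ref{thm:1} yields that $M$ is a spanning tree metric space.

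By Definition \ref{dfn:treepath} combined with Lemma \ref{lem:connected}, this means the basic geodesic graph $G_M$ in $K_M$ is a spanning tree on $X$ that realizes $M$, and Proposition \ref{prop:1}(2) further gives that $G_M$ is the \emph{unique} fully-labelled tree on $X$ realizing $M$. To conclude that $G$ itself is a tree, I would argue that $G$ coincides with $G_M$ edge by edge. One direction is immediate: any basic edge $e(u,v)$ of $K_M$ must be present in $G$, since otherwise the shortest $u$-$v$ path in $G$ would pass through intermediate vertices and thereby exhibit a cyclic permutation that makes $e(u,v)$ non-basic in $K_M$---a contradiction with its basicness. Hence $E(G_M)\subseteq E(G)$.

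The main obstacle, as I see it, is the reverse inclusion: ruling out any surplus edge of $G$ beyond the edges of the spanning tree $G_M$. My plan is to adapt the cycle-based argument from the proof of Theorem \ref{thm:1}. A hypothetical surplus edge $\{u,v\}\in E(G)\setminus E(G_M)$, together with the unique tree path joining $u$ and $v$ in $G_M$, closes a cycle in $G$ of total length $2\,d_G(u,v)$; by carrying out the same clockwise traversal and $L/R$-labelling of the vertices of this cycle and invoking Lemma \ref{lem:cycle}, one expects to extract a triple of vertices whose fourth point cannot exist in $X$, contradicting the median property of $G$ (the tie-breaking rule playing exactly the role it did in Theorem \ref{thm:1}). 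This forces $E(G)=E(G_M)$, so $G$ is the tree $G_M$.
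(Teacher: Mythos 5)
Your first two steps are sound and are essentially all the paper itself offers: Corollary~\ref{cor:2} is stated without proof, the intended argument being exactly your opening paragraph (median $\Rightarrow$ fourth-point condition, tie-breaking by hypothesis, hence Theorem~\ref{thm:1} makes $(X,d_G)$ a spanning tree metric space), after which the paper tacitly identifies $G$ with the basic geodesic graph $G_M$. Your inclusion $E(G_M)\subseteq E(G)$ is also correct: on a shortest $u$--$v$ path of $G$ every subpath is itself shortest, so a missing basic edge would be exhibited as non-basic. You deserve credit for seeing that the reverse inclusion is the actual content of the passage from ``spanning tree metric space'' to ``$G$ is a tree''.

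However, the patch you propose for that reverse inclusion cannot work, because $E(G)\subseteq E(G_M)$ is not a consequence of the stated hypotheses. A surplus edge $\{u,v\}\in E(G)\setminus E(G_M)$ leaves the metric $d_G$ untouched: either $w(u,v)>d_G(u,v)$, in which case the edge is invisible to the metric altogether, or $w(u,v)=d_G(u,v)$ and the edge is non-basic, in which case the cycle it closes with the $G_M$-path from $u$ to $v$ has total length $c=2\,d_G(u,v)$ and the surplus edge has length exactly $c/2$. That is precisely the situation in which Lemma~\ref{lem:cycle} guarantees that the fourth point \emph{does} exist (for any triple $\{u,x,v\}$ with $x$ on the tree path, $x$ itself is the median), so no violating triple can be extracted; the cycle argument of Theorem~\ref{thm:1} depends on every edge of the cycle being basic, which fails here by construction. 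Concretely, the weighted triangle on $\{a,b,c\}$ with $w(a,b)=1$, $w(b,c)=2$, $w(a,c)=3$ satisfies the tie-breaking rule and its shortest path metric satisfies the fourth-point condition (with median $b$), so it is a median graph in the sense of Remark~\ref{rem:median}; yet it is not a tree---only its basic geodesic graph, the path $a$--$b$--$c$, is. The corollary therefore holds only under the additional, implicit assumption that every edge of $G$ is basic (equivalently, that the conclusion is read as ``$(X,d_G)$ is a spanning tree metric space''). This is a gap in the statement as much as in your argument, but your third step as written would fail.
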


\begin{rem}\label{rem:insufficient}
As was mentioned in Remark~\ref{rem:median}, 
the fourth-point condition \textit{per se} is not a sufficient condition, but it is a  necessary condition in order to ensure that a finite metric space 
is induced by the shortest path metric in a tree 
(\textit{cf.}\ a  cycle graph on four vertices with a uniform edge length).
\end{rem}

\begin{cor}\label{cor:1}
Suppose $M:=(X,d_M)$ is a finite metric space  under the tie-breaking rule.  
Then $M$  is a spanning path metric space (Definition~\ref{dfn:treepath}) if and only if it satisfies \emph{the three-point condition}: for every (not necessarily distinct) three points $x,y,z\in X$, we have 
\[
\max\{d_M(x,y),d_M(y,z),d_M(z,x)\}=\frac{1}{2}\{d_M(x,y)+d_M(y,z)+d_M(z,x)\}. 
\]
The condition can be confirmed in $O(|X|^3)$ time. If $M$ is a spanning path metric space, it is realised by the unique shortest path that joins the farthest two points  in $X$. 
\end{cor}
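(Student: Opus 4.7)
My plan is to reduce the corollary to Theorem~\ref{thm:1} by showing that the three-point condition is a strictly stronger version of the fourth-point condition, and that this extra strength forces the spanning tree $G_M$ to collapse to a path. The forward direction should be essentially definitional: if $G_M$ is a spanning path, then any three vertices $x,y,z \in X$ admit a linear order along the path (the degenerate case in which two of them coincide is trivial), and the middle one, say $y$, satisfies $d_M(x,z)=d_M(x,y)+d_M(y,z)$ because $G_M$ realises $M$ by Lemma~\ref{lem:connected}, so the three-point equality follows at once.

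For the converse, I would first observe that the three-point condition implies the fourth-point condition. Given any triple $x,y,z \in X$, let $d_M(x,z)$ be the maximum of the three pairwise distances; the three-point equality rearranges to $d_M(x,z)=d_M(x,y)+d_M(y,z)$, so $y \in I(x,y)\cap I(y,z)\cap I(z,x)$, and Proposition~\ref{lem:4thPC_alternative} yields the fourth-point condition with $p^{*}=y$. Under the tie-breaking rule, Theorem~\ref{thm:1} then guarantees that $G_M$ is a spanning tree, and the remaining task is to rule out a vertex of degree at least three. Assuming some $v \in X$ has three neighbors $a,b,c$ in $G_M$, the tree structure forces each pairwise distance among $\{a,b,c\}$ to pass through $v$, so applying the three-point condition to $\{a,b,c\}$ would force the largest pairwise distance to equal $d_M(a,v)+d_M(b,v)+d_M(c,v)$; but every such distance involves only two of these three strictly positive terms, giving the required contradiction.

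For the complexity claim, it suffices to iterate over all $O(|X|^3)$ triples and check the three-point equality for each. Finally, under the tie-breaking rule the endpoints of the spanning path $G_M$ are the unique pair in $X$ maximising $d_M$, and the remaining vertices are placed along the path (uniquely) by sorting them according to their distance from one chosen endpoint. I expect the branching-vertex step to be the main point that requires care, though it reduces to a direct arithmetic observation using positivity of edge weights.
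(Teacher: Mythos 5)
Your proposal is correct and follows essentially the same route as the paper: derive the fourth-point condition from the three-point condition, invoke Theorem~\ref{thm:1} under the tie-breaking rule to get a spanning tree, and then exclude vertices of degree three or more via the arithmetic observation that all three pairwise distances among neighbours of such a vertex would pass through it. Your write-up merely makes explicit the degree-three contradiction and the forward direction, which the paper leaves as ``obvious,'' and additionally sketches the complexity and farthest-pair claims that the paper declines to prove.
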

\begin{proof}
We only prove the first statement. The three-point condition obviously holds for all spanning path metric spaces. Therefore, we assume that the three-point condition holds and show that the basic geodesic graph, $G_M$, in $K_M$ is a path graph on $X$. It is clear that $y$ is the fourth point, $p^{*}$, for $\{x,y,z\}$ when  the left-hand side equals $d_M(z,x)$. This means that the fourth-point condition automatically holds for any finite metric space that satisfies the three-point condition. Therefore, our assumption implies that $G_M$  is a tree on $X$. The three-point condition also indicates that every vertex in $G_M$ has a degree of one or two. In other words, if vertex $x$ has degree three or more, then any three distinct vertices adjacent to $x$ would violate the three-point condition. Hence, $G_M$ is a path graph on $X$, which completes the proof.
\end{proof}

\section{Discussion}\label{sec:discussion}
The hyperbolicity of finite metric spaces (or graphs) is a concept provided by Gromov~\cite{G,SS} and measures the deviance of a metric space from Buneman's four-point condition. If a metric space, $M$, satisfies the four-point condition, then the hyperbolicity of $M$ equals $0$, and $M$ is said to be $0$-hyperbolic. As was previously discussed, any complete graph with a uniform edge length is $0$-hyperbolic. Because the four-point condition is a stronger version of the triangular inequality, all metric triangles are also $0$-hyperbolic. Therefore, although the value of hyperbolicity is usually called the `tree-likeness' of $M$, a more precise interpretation refers to the \emph{partially labelled} tree-likeness of $M$. 
Therefore, as a final remark, we provide the notion of a \emph{fully labelled} tree-likeness of $M$.

Let us say that finite metric space $M$  is  \emph{$\rho$-roundabout}. Here,  $\rho$ is defined to be 
%\[
%\max_{x,y,z\in X}\min_{i\in X}\;\{d_M(x,i)+d_M(y,i)+d_M(z,i)-\frac{1}{2}(d_M(x,y)+d_M(y,z)+d_M(z,x))\}.
%\]
\[
\max_{x,y,z\in X}\min_{i\in X}\;\frac{d_M(x,i)+d_M(y,i)+d_M(z,i)}{d_M(x,y)+d_M(y,z)+d_M(z,x)}-\frac{1}{2}.
\]
This measures how far $M$ deviates from the fourth-point condition.   Provided that the tie-breaking rule holds, the value of $\rho$ can be regarded as the spanning tree-likeness of $M$ or the circuitousness of $d_M$ as illustrated in Figure~\ref{fig:roundabout}. Note that $\rho$ is invariant under multiplication of $d_M$ by a constant. As we have already seen, $M$ is $0$-roundabout if and only if there is an exact fit between $M$ and the MST. 

\begin{figure}[htbp]
\centering
\includegraphics[width=0.4\textwidth]{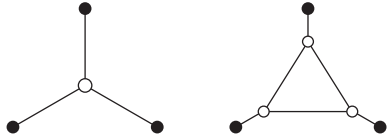}
\caption{Illustrations of spanning tree-likeness ($\rho=0$ and $\rho>0$)\label{fig:roundabout}}
\end{figure}

The degree of violation of the three-point condition similarly provides the spanning path-likeness of $M$---the maximum discrepancy between the left and right-hand sides of the triangular inequality. On the other hand, hyperbolicity does not provide any information because all metric triangles are $0$-hyperbolic.

\section*{Acknowledgement}
This work was supported in part by JSPS KAKENHI Grant Number 25120012 and 26280009. 
% The authors thank the anonymous reviewers for their careful reading and helpful comments. 
Special thanks are extended to Yoshimasa Uematsu for his insights concerning the concept of median.

\bibliographystyle{amsplain}
\bibliography{manuscriptMH}

\end{document}